\newtheorem{Thm}{Theorem}[section]
\newtheorem{Prop}[Thm]{Proposition}
\newtheorem{Cor}[Thm]{Corollary}
\newtheorem{Lem}[Thm]{Lemma}
\theoremstyle{remark}
\newtheorem{Rem}[Thm]{Remark}
\theoremstyle{problem}
\numberwithin{equation}{section}
\begin{document}

\title[Two-sided Pompeiu problem and discrete groups]
{The two-sided Pompeiu problem for discrete groups}

\author[P. A. Linnell]{Peter A. Linnell}
\address{Math  \\Virginia Tech \\ Blacksburg \\ VA  24061--1026
\\ USA}
\email{plinnell@math.vt.edu}

\author[M. J. Puls]{Michael J. Puls}
\address{Department of Mathematics \\
John Jay College-CUNY \\
524 West 59th Street \\
New York, NY 10019 \\
USA}
\email{mpuls@jjay.cuny.edu}
\thanks{The second author would like to thank the Office for the Advancement of Research at John Jay College for research support for this project}

\begin{abstract}
We consider a two-sided Pompeiu type problem for a discrete group $G$. We give necessary and sufficient conditions for a finite subset $K$ of $G$ to have the $\mathcal{F}(G)$-Pompeiu property. Using group von Neumann algebra techniques, we give necessary and sufficient conditions for $G$ to be a $\ell^2(G)$-Pompeiu group. 
\end{abstract}

\keywords{augmentation ideal, finite conjugate subgroup, group ring, group von Neumann algebra, Pompeiu group, Pompeiu property}
\subjclass[2010]{Primary: 20C07; Secondary: 22D25, 43A15, 43A46}

\date{September 24th, 2020}
\maketitle

\section{Introduction}\label{Introduction}
Let $\mathbb{C}$ be the complex numbers, $\mathbb{R}$ the real numbers, $\mathbb{Z}$ the integers and $\mathbb{N}$ the natural numbers. Let $2 \leq n \in \mathbb{N}$ and let $K$ be a compact subset of $\mathbb{R}^n$ with positive Lebesgue measure. The Pompeiu problem asks the following: Is $f = 0$ the only continuous function on $\mathbb{R}^n$ that satisfies 
\begin{equation} 
 \int_{\sigma(K)} f \, dx = 0  \label{eq:contpomp}
\end{equation}
for all rigid motions $\sigma$? If the answer to the question is yes, then $K$ is said to have the Pompeiu property. It is known that disks of positive radius do not have the Pompeiu property, see  \cite[Section 6]{Zalcman80} and the references therein for the details. The Pompeiu problem and some of its variations have been studied in various contexts, see  \cite{BerensteinZalcman80, CareyKaniuthMoran91, PeyerimhoffSamior10, RawatSitaram97, ScottSitaram88, Zalcman80} and the references therein for more information. 

In \cite{ScottSitaram88} the following version of the Pompeiu problem was studied: Let $G$ be a unimodular group with Haar measure $\mu$. Suppose $K$ is a relatively compact subset of $G$ with positive measure. Is $f = 0$ the only function in $L^1(G)$ that satisfies 
\begin{equation}
\int_{gKh} f(x) \, d\mu =0    \label{eq:doubletrans} 
\end{equation}
for all $g, h \in G$? This question was studied further in \cite{CareyKaniuthMoran91}. The purpose of this note is to investigate (\ref{eq:doubletrans}) in the discrete group setting. 

For the rest of this paper, $G$ will always be a finitely generated discrete group and $\mathcal{C}$ will denote a class of complex-valued functions on $G$ that contain the zero function. Let $f$ be a complex-valued function on $G$. We shall represent $f$ as a formal sum $\sum_{g \in G} a_g g$, where $a_g \in \mathbb{C}$ and $f(g) = a_g$. Define $\mathcal{F}(G)$ to be the set of all functions on $G$, and let $\ell^p(G)$ be the functions in $\mathcal{F}(G)$ that satisfy $\sum_{g \in G} \vert a_g \vert^p < \infty$. The group ring $\mathbb{C}G$ consists of those functions where $a_g =0$ for all but finitely many $g$. The group ring can also be thought of as the functions on $G$ with compact support. Let $K$ be a finite subset of $G$. In this paper we will consider the following discrete version of (\ref{eq:doubletrans}): When is $f = 0$ the only function in $\mathcal{C}$ that satisfies 
\begin{equation}
\sum_{x \in gKh} f(x) = 0  \label{eq:discretedoubletrans} 
\end{equation}
for all $g, h \in G$? 

The following related Pompeiu type problem for discrete groups was investigated in \cite{Puls13}: When is $f = 0$ the only function in $\mathcal{C}$ that satisfies 
\begin{equation}
\sum_{x \in gK} f(x) = 0 \label{eq:discretelefttrans}
\end{equation}
for all $g \in G$?

It is not difficult to see that $f = 0$ is the only function in $\mathcal{C}$ that satisfies (\ref{eq:discretedoubletrans}) if it is the only function in $\mathcal{C}$ satisfying (\ref{eq:discretelefttrans}). 

We shall say that a finite subset $K$ of $G$ is a {\em $\mathcal{C}$-Pompeiu set} if $f =0$ is the only function in $\mathcal{C}$ that satisfies (\ref{eq:discretedoubletrans}). A {\em $\mathcal{C}$-Pompeiu group} is a group for which every nonempty finite subset is a $\mathcal{C}$-Pompeiu set. 

The identity element of $G$ will be denoted by 1. If $S \subseteq G$, then we will write $\chi_S$ to indicate the characteristic function on $S, \chi_S(g) =1$ if $g \in S$ and $\chi_S(g) =0$ if $g \notin S$. If $S$ consists of one element $g$, then $\chi_g$ will be the usual point mass concentrated at $g$. 

One of our main results is:
\begin{Thm} \label{doubletranspompeiuset}
Let $G$ be a discrete group and suppose $K$ is a finite subset of $G$. Let $I$ be the ideal in $\mathbb{C}G$ that is generated by $\chi_K$. Then $K$ is a $\mathcal{F}(G)$-Pompeiu set if and only if $I = \mathbb{C}G$.
\end{Thm}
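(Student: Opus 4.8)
The plan is to translate the two-sided translation condition into a statement about annihilators under the canonical pairing between $\mathcal{F}(G)$ and the group ring $\mathbb{C}G$, and then to invoke a standard linear-algebra fact. First I would introduce the bilinear form $\langle f, \alpha\rangle = \sum_{g\in G} f(g)\alpha(g)$ for $f\in\mathcal{F}(G)$ and $\alpha=\sum_g\alpha(g)g\in\mathbb{C}G$, noting that the sum is finite because $\alpha$ has finite support, and that $f\mapsto\langle f,\cdot\rangle$ identifies $\mathcal{F}(G)$ with the full linear dual $(\mathbb{C}G)^{*}$: injectivity follows from $\langle f,g\rangle=f(g)$, and surjectivity holds because a linear functional on $\mathbb{C}G$ may be prescribed arbitrarily on the basis $\{g:g\in G\}$.

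Next I would record the computation
\[
\sum_{x\in gKh} f(x)=\sum_{k\in K}f(gkh)=\langle f,\, g\chi_K h\rangle ,
\]
valid for all $g,h\in G$ since $x\mapsto gxh$ is a bijection of $G$. Hence $f$ satisfies (\ref{eq:discretedoubletrans}) for every $g,h\in G$ if and only if $\langle f, g\chi_K h\rangle=0$ for all $g,h$. Now $I$, being the two-sided ideal generated by $\chi_K$, is the set of finite sums $\sum_i\alpha_i\chi_K\beta_i$ with $\alpha_i,\beta_i\in\mathbb{C}G$; expanding each $\alpha_i\chi_K\beta_i$ in the group basis shows that $I$ is exactly the $\mathbb{C}$-linear span of $\{g\chi_K h:g,h\in G\}$. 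Therefore the displayed condition says precisely that $f$ annihilates $I$, and we conclude that $K$ is a $\mathcal{F}(G)$-Pompeiu set if and only if the annihilator $I^{\perp}=\{f\in\mathcal{F}(G):\langle f,\alpha\rangle=0\text{ for all }\alpha\in I\}$ is zero.

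Finally I would close the argument with the elementary observation that for a subspace $I$ of a vector space $V$ one has $I^{\perp}=0$ in $V^{*}$ if and only if $I=V$: the nontrivial direction is that $I\subsetneq V$ forces $I^{\perp}\ne 0$, which follows by choosing $v\in V\setminus I$, extending a basis of $I$ to a basis of $V$, and taking the coordinate functional at $v$, which lies in $I^{\perp}\setminus\{0\}$. Applying this with $V=\mathbb{C}G$ gives $I^{\perp}=0\iff I=\mathbb{C}G$, which combined with the previous paragraph yields the theorem.

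I do not anticipate a genuine obstacle; the proof is short. The one point that requires care is that $\mathcal{F}(G)$ must be the \emph{entire} dual of $\mathbb{C}G$ for the implication ``$I\subsetneq\mathbb{C}G\Rightarrow$ some nonzero $f$ is a nontrivial solution of (\ref{eq:discretedoubletrans})'' to go through. This is exactly why the statement concerns all functions on $G$ rather than a proper subspace such as $\ell^2(G)$, and it is why I would make the surjectivity of $f\mapsto\langle f,\cdot\rangle$ explicit rather than leave it implicit.
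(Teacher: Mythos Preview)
Your proof is correct and follows essentially the same approach as the paper: identify $\mathcal{F}(G)$ with the linear dual of $\mathbb{C}G$ via the natural pairing, and show that the two-sided Pompeiu condition on $f$ is exactly the statement that $f$ annihilates the ideal $I$. The only cosmetic differences are that the paper uses a sesquilinear pairing (hence $\bar{f}$ appears as the counterexample), and for the implication $I=\mathbb{C}G\Rightarrow K$ is Pompeiu the paper routes through the convolution reformulation of Lemma~\ref{equivalencedoubletrans} rather than reading it off directly from $\langle f,g\rangle=f(g)$ as you do; your treatment of both directions via the same duality argument is slightly more streamlined but not substantively different.
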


We will show that as a consequence of this result, algebraically closed groups and universal groups are examples of $\mathcal{F}(G)$-Pompeiu groups. This contrasts sharply with the one-sided translation Pompeiu type problem studied in \cite{Puls13}, since there are nonzero functions in $\ell^1(G)$ -in fact $\mathbb{C}G$- for the above groups that satisfy (\ref{eq:discretelefttrans}).

Using group von Neumann algebra techniques we will prove the following characterization, which is a generalization of \cite[Corollary 2.7]{CareyKaniuthMoran91}, for $\ell^2(G)$-Pompeiu groups.
\begin{Thm} \label{l2Pompeiugroup}
Let $G$ be a discrete group. Then $G$ is a $\ell^2(G)$-Pompeiu group if and only if $G$ does not contain a nontrivial finite normal subgroup.
\end{Thm}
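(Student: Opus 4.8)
The plan is to reinterpret (\ref{eq:discretedoubletrans}) inside the group von Neumann algebra $\mathcal{N}(G)$, i.e.\ the von Neumann algebra generated by the left regular representation $\lambda$ of $G$ on $\ell^2(G)$; let $\rho$ denote the right regular representation. For $f\in\ell^2(G)$ one has $\sum_{x\in gKh}f(x)=\langle \lambda_{g^{-1}}\rho_h f,\chi_K\rangle$, so $f$ satisfies (\ref{eq:discretedoubletrans}) for all $g,h$ exactly when $\chi_K$ is orthogonal to $V_f:=\overline{\operatorname{span}}\{\lambda_g\rho_h f:g,h\in G\}$. Since $V_f$ is invariant under both $\lambda$ and $\rho$, the orthogonal projection $P_f$ onto it lies in $\lambda(G)'\cap\rho(G)'=\mathcal{N}(G)'\cap\mathcal{N}(G)=\mathcal{Z}(\mathcal{N}(G))$, and $P_f f=f$. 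As $V_f=P_f\ell^2(G)$, the condition reads $P_f\chi_K=0$. From this I would record the reformulation: a nonempty finite $K\subseteq G$ is an $\ell^2(G)$-Pompeiu set if and only if no nonzero central projection of $\mathcal{N}(G)$ annihilates $\chi_K$ (equivalently, the central support of $\chi_K$ in $\mathcal{N}(G)$ is $1$); indeed, if $P$ is a nonzero central projection with $P\chi_K=0$, then any nonzero $f\in P\ell^2(G)$ has $P_f\le P$ and witnesses failure of the Pompeiu property. Thus $G$ is an $\ell^2(G)$-Pompeiu group iff $\chi_K$ has central support $1$ for every nonempty finite $K$.

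For the ``only if'' half, suppose $N$ is a nontrivial finite normal subgroup and put $K=N$. Then $e:=|N|^{-1}\chi_N$ satisfies $e^2=e$ (since $\chi_N^2=|N|\chi_N$), $e^*=e$ (since $N=N^{-1}$), and $e$ is central in $\mathbb{C}G$, hence in $\mathcal{N}(G)$, because $gNg^{-1}=N$ for all $g\in G$. Since $N\neq\{1\}$ we have $e\neq 1$, so $1-e$ is a nonzero central projection with $(1-e)\chi_N=\chi_N-e\chi_N=0$; hence $N$ is not an $\ell^2(G)$-Pompeiu set and $G$ is not an $\ell^2(G)$-Pompeiu group.

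For the ``if'' half, assume $G$ has no nontrivial finite normal subgroup, and let $z$ be a central projection of $\mathcal{N}(G)$ with $z\chi_K=0$ for some nonempty finite $K$; I must show $z=0$. Writing $z=\sum_g c_g g\in\ell^2(G)$, centrality forces $z$ to be constant on conjugacy classes, and then $\sum_g|c_g|^2<\infty$ forces every conjugacy class meeting $\operatorname{supp}(z)$ to be finite, so $\operatorname{supp}(z)\subseteq\Delta(G)$, the FC-center. The absence of nontrivial finite normal subgroups makes $\Delta(G)$ torsion-free (its torsion subgroup $\Delta^{+}(G)$ is precisely the union of the finite normal subgroups of $G$), and a torsion-free FC-group is abelian because the derived subgroup of an FC-group is periodic. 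Hence $\Delta:=\Delta(G)$ is a countable torsion-free abelian group, $z\in\mathcal{N}(\Delta)\cong L^\infty(\widehat{\Delta})$, and $z=\chi_E$ for a Borel set $E\subseteq\widehat{\Delta}$. Next I reduce $z\chi_K=0$ to a relation inside $\Delta$: the vectors $zk$ ($k\in K$) are supported in the pairwise disjoint cosets $\Delta k$, so $z\chi_{K\cap\Delta c}=0$ for each coset $\Delta c$ meeting $K$; right-multiplying by $c^{-1}$ yields $z\chi_S=0$ for some nonempty finite $S\subseteq\Delta$. Under the Fourier transform $\chi_S$ becomes the function $\gamma\mapsto\sum_{s\in S}\gamma(s)$ on $\widehat{\Delta}$; choosing a finitely generated $\Delta_0\le\Delta$ with $S\subseteq\Delta_0\cong\mathbb{Z}^n$, this function is the pullback, under the measure-preserving restriction map $\widehat{\Delta}\to\widehat{\Delta_0}=\mathbb{T}^n$, of a nonzero Laurent polynomial, whose zero set in $\mathbb{T}^n$ has Haar measure zero. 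Hence $E$ is contained in a Haar-null set, so $z=\chi_E=0$, as required.

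The reformulation and the coset bookkeeping are routine; the step to watch is the structure theory of the FC-center — that an $\ell^2$ class function is supported on $\Delta(G)$, that ``no nontrivial finite normal subgroup'' forces $\Delta(G)$ to be torsion-free, and that a torsion-free FC-group is abelian — since this is exactly what makes $\mathcal{N}(\Delta)$ commutative and reduces the problem to the elementary fact that zero sets of nonzero trigonometric (Laurent) polynomials on a dual group are Haar-null. A purely algebraic rephrasing of the last step inside $\mathbb{C}\Delta$ is possible but less transparent.
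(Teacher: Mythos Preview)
Your proof is correct and takes a genuinely different route from the paper's.

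The paper argues in two stages. First it proves an intermediate result (Theorem~\ref{NGPompeiugroup}): if $G$ has no nontrivial finite normal subgroup then $G$ is an $\mathcal{N}(G)$-Pompeiu group. This is done by showing that a counterexample would produce a nonzero central idempotent $e\in\mathcal{N}(G)$ with $\widetilde{\chi_K}\ast e=0$; then, exactly as you do, $e$ is pushed into $\mathcal{N}(\Delta(G))$, the transversal decomposition yields $\chi_{Kt}\ast e=0$ for some nonzero $\chi_{Kt}\in\mathbb{C}(\Delta G)$, and the contradiction comes from Cohen's theorem that $\mathbb{C}A$ has no zero divisors on $\ell^2(A)$ for $A$ torsion-free abelian. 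Second, the paper lifts from $\mathcal{N}(G)$ to $\ell^2(G)$ using an Ore-type device: given $0\neq f\in\ell^2(G)$ one finds a nonzero divisor $\theta\in\mathcal{N}(G)$ with $f\ast\theta\in\mathcal{N}(G)$ (\cite[Lemma~7]{Linnell92}), checks $f\ast\theta\neq 0$ via a projection argument (\cite[Lemma~5]{Linnell91}), and feeds $f\ast\theta$ into Theorem~\ref{NGPompeiugroup}.

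You bypass the second stage entirely. Your observation that the closed bi-invariant subspace $V_f$ has a projection in $\mathcal{Z}(\mathcal{N}(G))$ converts the $\ell^2(G)$-Pompeiu property for $K$ directly into ``no nonzero central projection kills $\chi_K$'', so the $\ell^2\to\mathcal{N}(G)$ reduction is unnecessary and the external lemmas from \cite{Linnell91,Linnell92} are avoided. Your endgame then parallels the paper's first stage: support of a central element lies in $\Delta(G)$, which is torsion-free abelian, and you replace the citation of Cohen's theorem by the equivalent elementary fact that a nonzero Laurent polynomial vanishes on a Haar-null subset of $\mathbb{T}^n$. One small wording issue: the cosets $\Delta k$ for $k\in K$ need not be pairwise distinct, but your next sentence correctly groups by coset, so the argument is unaffected. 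Overall your approach is more self-contained; the paper's has the side benefit of isolating the $\mathcal{N}(G)$ statement as an independent theorem.
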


It appears that the first investigation of a Pompeiu type problem in the discrete setting was the paper \cite{Zeilberger78}. For discrete groups the Pompeiu problem with respect to left translations was studied in \cite{Puls13}. Pompeiu type problems for finite subsets of the plane were examined in \cite{KissLaczKovichVincze18}. An interesting connection between the Fuglede conjecture and the Pompeiu problem for finite Abelian groups was established in \cite{KissMalSomlaiVizer20}.

In Section \ref{preliminaries} we give some preliminary material and preliminary results, including giving an equivalent condition to (\ref{eq:discretedoubletrans}) in terms of convolution equations. We prove Theorem \ref{doubletranspompeiuset} in Section \ref{theorem1} and we also give examples of groups that are $\mathcal{F}(G)$-Pompeiu groups. In Section \ref{Theoremtwo} we discuss group von Neumann algebras and prove Theorem \ref{l2Pompeiugroup}.
\section{Preliminaries}\label{preliminaries} 
In this section we give some necessary background and prove some preliminary results. Let $f = \sum_{g\in G} a_g g$ and let $h = \sum_{g\in G} b_g g$ be functions on $G$. The convolution of $f$ and $h$ is given by
\[ f \ast h = \sum_{g,x \in G} a_g b_x gx = \sum_{g \in G} \left( \sum_{x \in G} a_{gx^{-1}}b_x \right)g. \]
Sometimes we will write $f \ast h (g) = \sum_{x \in G} f(gx^{-1}) h(x)$ for when the function $f \ast h$ is evaluated at $g$. With respect to pointwise addition and convolution, $\mathbb{C}G$ is a ring. Also if $f \in \ell^1(G)$ and $h \in \ell^p(G)$, then $f \ast h \in \ell^p(G)$. However if both $f$ and $h$ are in $\ell^2(G)$ it might be the case $f \ast h$ is not in $\ell^2(G)$.

For $g \in G$, the left translation of $f$ by $g$ is given by $L_gf(x) = f(gx)$ and the right translation of $f$ by $g$ is denoted by $R_gf(x) = f(xg^{-1})$, where $x \in G$. Note that $L_gf = \chi_{g^{-1}} \ast f$ and $R_gf = f \ast \chi_g$. For a function $f, \tilde{f}$ will denote the function $\tilde{f}(x) = f(x^{-1})$, where $x \in G$ and $\bar{f}$ will indicate the complex conjugate of $f$. The following simple lemma gives a useful characterization of (\ref{eq:discretedoubletrans}) in terms of convolution equations.

\begin{Lem}\label{equivalencedoubletrans}
Let $K$ be a finite subset of $G$, and let $f$ be a complex-valued function on $G$. Then $f$ satisfies (\ref{eq:discretedoubletrans}) if and only if $\widetilde{{\chi_K}} \ast L_gf = 0$ for all $g \in G$.
\end{Lem}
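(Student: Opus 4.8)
The plan is to unwind both sides into explicit finite sums indexed by $K$ and observe that they coincide. First I would rewrite the left-hand condition. Since left and right translation in $G$ are injective, the map $k \mapsto gkh$ is a bijection from $K$ onto $gKh$, so
\[
\sum_{x \in gKh} f(x) = \sum_{k \in K} f(gkh).
\]
Thus (\ref{eq:discretedoubletrans}) is equivalent to saying $\sum_{k \in K} f(gkh) = 0$ for all $g, h \in G$.

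Next I would evaluate the convolution $\widetilde{\chi_K} \ast L_g f$ at an arbitrary point $y \in G$ using the formula $(\phi \ast \psi)(y) = \sum_{x \in G} \phi(yx^{-1}) \psi(x)$ with $\phi = \widetilde{\chi_K}$ and $\psi = L_g f$. Since $\widetilde{\chi_K}(yx^{-1}) = \chi_K\big((yx^{-1})^{-1}\big) = \chi_K(xy^{-1})$, this gives
\[
\big(\widetilde{\chi_K} \ast L_g f\big)(y) = \sum_{x \in G} \chi_K(xy^{-1})\, L_g f(x).
\]
The indicator $\chi_K(xy^{-1})$ is nonzero exactly when $x = ky$ for some $k \in K$, so after the substitution $x = ky$ the sum collapses to $\sum_{k \in K} L_g f(ky) = \sum_{k \in K} f(gky)$.

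Combining the two computations, the condition $\widetilde{\chi_K} \ast L_g f = 0$ for all $g \in G$ says precisely that $\sum_{k \in K} f(gky) = 0$ for all $g, y \in G$; letting $g$ and $y$ range independently over $G$ and renaming $y$ as $h$, this is exactly the reformulation of (\ref{eq:discretedoubletrans}) obtained in the first step. Since every step is an equivalence, both implications follow simultaneously.

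I do not expect any genuine obstacle here: the argument is a direct bookkeeping exercise with the convolution formula. The only points requiring care are the placement of the inverses in the definitions of $\widetilde{\chi_K}$ and $L_g$, and the reindexing $x = ky$ that turns the convolution sum into a sum over $K$.
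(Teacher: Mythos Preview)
Your proof is correct and follows essentially the same approach as the paper: both unwind the convolution $\widetilde{\chi_K} \ast L_g f$ at a point and identify the resulting sum $\sum_{k\in K} f(gkh)$ with $\sum_{x\in gKh} f(x)$. The only cosmetic difference is that the paper evaluates at $h$ and passes through $\sum_{x\in G}\chi_K(x)L_gf(xh)$, whereas you evaluate at $y$ and reindex via $x=ky$; the underlying computation is the same.
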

\begin{proof}
Let $g, h \in G$, then 
\begin{eqnarray*}
\sum_{x \in gKh} f(x)  &  = &  \sum_{x \in Kh} L_g f(x) \\
                                   & = & \sum_{k \in K} L_g f(kh)  \\
                                   & = & \sum_{x \in G}\chi_K (x) L_g f(xh)  \\
                                   & = & \widetilde{\chi_K} \ast L_g f (h).
\end{eqnarray*}
Thus $\sum_{x \in gKh} f(x) =0$ for all $g,h \in G$ if and only if $\widetilde{\chi_K} \ast L_g f =0$ for all $g \in G$. 
\end{proof}
A similar calculation shows that (\ref{eq:discretedoubletrans}) is also equivalent to $R_g f \ast \widetilde{\chi_K} = 0$ for all $g \in G$. Observe that $\widetilde{\chi_K} \ast L_g f = \widetilde{\chi_K} \ast \chi_{g^{-1}} \ast f$ and $R_g f \ast \widetilde{\chi_K} = f \ast \chi_g \ast \widetilde{\chi_K}$.
\begin{Rem}
It was shown in \cite[Proposition 2.3]{Puls13} that the single convolution equation $\chi_K \ast \tilde{f} =0$ is equivalent to (\ref{eq:discretelefttrans}).
\end{Rem}

If $X$ is a set, then $\vert X \vert$ will indicate the cardinality of $X$. Let $R$ be a ring and recall that the center of $R$ is the set of all elements in $R$ that commute under multiplication with all elements of $R$. An {\em idempotent} in $R$ is an element $e$ that satisfies $e^2 =e$. A central idempotent for $R$ is an idempotent contained in the center of $R$. In \cite{Puls13} it was shown that if $G$ contains a nonidentity element of finite order, then there is a finite subset $K$ of $G$ and a nonzero function in $\mathbb{C}G$ that satisfies (\ref{eq:discretelefttrans}). We shall see that this is not the case for (\ref{eq:discretedoubletrans}). What is true though is
\begin{Prop}\label{finitenormal}
Suppose $K$ is a nontrivial finite normal subgroup of $G$, then $K$ is not a $\mathcal{C}$-Pompeiu set for any class of functions that contains $\mathbb{C}G$. 
\begin{proof}
Write $\chi_K = \sum_{k\in K} k$ for the characteristic function on $K$. Also $\widetilde{\chi_K} = \chi_K$ since $K$ is a subgroup of $G$. Now,
\begin{equation*}
\chi_K \ast \chi_K = \sum_{k \in K} k\vert K \vert = \vert K \vert \chi_K,
\end{equation*}
thus $\chi_K \ast (\chi_K - \vert K \vert ) =0$. For $g \in G, \chi_K \ast \chi_g = \chi_g \ast \chi_K$ if and only if $gK = Kg$. Consequently, $\chi_K$ is in the center of $\mathbb{C}G$ since $K$ is normal in $G$. Let $g \in G$, then
\begin{eqnarray*}  \chi_K \ast L_g (\chi_K - \vert K \vert)   &  =   & \chi_K \ast \chi_{g^{-1}} \ast (\chi_K - \vert K \vert)\\
      &   =  &  \chi_{g^{-1}} \ast \chi_K \ast (\chi_K - \vert K \vert)  \\
      &  =  & 0. 
 \end{eqnarray*}     
      Hence, $\widetilde{\chi_K} \ast L_g (\chi_K - \vert K \vert) =0$ for all $g \in G$. Lemma \ref{equivalencedoubletrans} yields that $K$ is not a $\mathcal{C}$-Pompeiu set for any $\mathcal{C}$ containing $\mathbb{C}G$. 
\end{proof}
\end{Prop}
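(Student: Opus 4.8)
\emph{Proposal.} The plan is to disprove the Pompeiu property directly, by exhibiting a single explicit nonzero witness $f$ lying in $\mathbb{C}G$, and hence in any class $\mathcal{C} \supseteq \mathbb{C}G$, that satisfies (\ref{eq:discretedoubletrans}). Showing $K$ is \emph{not} a $\mathcal{C}$-Pompeiu set amounts exactly to producing such an $f$. By Lemma \ref{equivalencedoubletrans} the defining condition is equivalent to $\widetilde{\chi_K} \ast L_g f = 0$ for every $g \in G$, so I only need a nonzero $f$ annihilated in this sense. Because $K$ is a subgroup we have $K = K^{-1}$, whence $\widetilde{\chi_K} = \chi_K$, and the target simplifies to $\chi_K \ast L_g f = 0$ for all $g$.

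The witness is suggested by the fact that $\chi_K$ is, up to scaling, an idempotent. First I would compute $\chi_K \ast \chi_K = |K|\,\chi_K$, which rearranges to $\chi_K \ast (\chi_K - |K|) = 0$, where $|K|$ abbreviates $|K|\cdot 1$. I therefore set $f = \chi_K - |K|\cdot 1 \in \mathbb{C}G$. Since $K$ is \emph{nontrivial}, it contains some $k \neq 1$, and evaluating gives $f(k) = \chi_K(k) - |K|\,\chi_1(k) = 1 \neq 0$; this is where the nontriviality hypothesis is used, and it guarantees $f \neq 0$. The identity $\chi_K \ast f = 0$ already settles the instance $g = 1$.

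The crux is promoting $\chi_K \ast f = 0$ to $\chi_K \ast L_g f = 0$ for \emph{all} $g$. Writing $L_g f = \chi_{g^{-1}} \ast f$, I get $\chi_K \ast L_g f = \chi_K \ast \chi_{g^{-1}} \ast f$, so the aim is to slide $\chi_K$ past $\chi_{g^{-1}}$ and then apply $\chi_K \ast f = 0$. This is precisely where normality enters: since $\chi_K \ast \chi_g = \chi_g \ast \chi_K$ holds if and only if $gK = Kg$, normality of $K$ makes $\chi_K$ central in $\mathbb{C}G$. Thus $\chi_K \ast \chi_{g^{-1}} = \chi_{g^{-1}} \ast \chi_K$, giving $\chi_K \ast L_g f = \chi_{g^{-1}} \ast (\chi_K \ast f) = 0$. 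Lemma \ref{equivalencedoubletrans} then certifies that $f$ satisfies (\ref{eq:discretedoubletrans}), so $K$ fails to be $\mathcal{C}$-Pompeiu.

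I would flag the centrality step as the one genuine obstacle. The annihilation $\chi_K \ast f = 0$ is by itself only the $g = 1$ instance, and without normality there is no reason for the remaining left translates $L_g f$ to stay annihilated. Recognizing that normality is exactly the hypothesis upgrading a single convolution equation to the full $G$-indexed family, via centrality of $\chi_K$, is the conceptual heart of the argument, even though each individual computation is short.
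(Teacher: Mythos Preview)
Your proof is correct and follows essentially the same approach as the paper: the witness $f = \chi_K - |K|\cdot 1$, the idempotent computation $\chi_K \ast \chi_K = |K|\chi_K$, and the use of normality to make $\chi_K$ central so that $\chi_K \ast L_g f = \chi_{g^{-1}} \ast (\chi_K \ast f) = 0$ are exactly the steps the paper takes. You even add the explicit check that $f(k) = 1$ for some $k \neq 1$, which the paper leaves implicit.
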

\begin{Rem}
In the above proof, $\frac{\chi_k}{\vert K \vert}$ is a central idempotent in $\mathbb{C}G$. We shall see that central idempotents play a critical role in the proof of Theorem \ref{l2Pompeiugroup}.
\end{Rem}

\section{Theorem \ref{doubletranspompeiuset}}\label{theorem1}
In this section we prove Theorem \ref{doubletranspompeiuset} and give some examples of groups that are $\mathcal{F}(G)$-Pompeiu groups.

Let $f = \sum_{g \in G} a_gg \in \mathbb{C}G$ and $h = \sum_{g \in G} b_g g \in \mathcal{F}(G)$. Define a map $\langle \cdot, \cdot \rangle \colon \mathbb{C}G \times \mathcal{F}(G) \rightarrow \mathbb{C}$ by 
\begin{equation*}
\langle f, h \rangle = \sum_{g \in G} a_g \overline{b_g}. 
\end{equation*}
For a fixed $h \in \mathcal{F}(G), \langle \cdot, h \rangle$ is a linear functional on $\mathbb{C}G$. Now suppose $T$ is a linear functional on $\mathbb{C}G$. Define $h(g) = \overline{T(g)}$ for each $g \in G$. Thus each linear functional on $\mathbb{C}G$ defines an element of $\mathcal{F}(G)$. Hence, the vector space dual of $\mathbb{C}G$ can be identified with $\mathcal{F}(G)$. 
\subsection{Proof of Theorem \ref{doubletranspompeiuset}}
We now prove Theorem \ref{doubletranspompeiuset}. Let $K$ be a finite subset of $G$ and let $I$ be the ideal in $\mathbb{C}G$ generated by $\chi_K$. We begin by showing that if $K$ is a $\mathcal{F}(G)$-Pompeiu set, then $I = \mathbb{C}G$. Now assume that $K$ is a $\mathcal{F}(G)$-Pompeiu set and $I \neq \mathbb{C}G$. Because $I$ is a subspace of $\mathbb{C}G$, there is a nonzero $f \in \mathcal{F}(G)$ for which $\langle \alpha, f \rangle =0$ for all $\alpha \in I$. 

Fix $g \in G$ and let $h \in G$. Since $I$ is an ideal, $R_hL_g \chi_K \in I$, which means $\langle R_hL_g \chi_K, f \rangle =0$. Now
\begin{eqnarray*}
\langle R_h L_g \chi_K, f \rangle & = & \sum_{y \in G} R_h L_g \chi_K(y) \overline{f(y)}  \\
                                                    &  =  &  \sum_{y \in G} \chi_K (gyh^{-1}) \overline{f(y)} \\
                                                    &  = & \sum_{y \in G} \chi_K(yh^{-1}) \overline{f(g^{-1}y}) \\
                                                    & = & (\widetilde{\chi_K} \ast L_{g^{-1}} \overline{f})(h).                                                    
\end{eqnarray*}
Thus $(\widetilde{\chi_K} \ast L_{g^{-1}} \overline{f}) (h) = 0$ for all $h \in G$. Consequently, $\widetilde{\chi_K }\ast L_{g^{-1}} \overline{f} = 0$ for all $g \in G$. Thus $\overline{f}$  is a nonzero function that satisfies (\ref{eq:discretedoubletrans}), contradicting our assumption that $K$ is a $\mathcal{F}(G)$-Pompeiu set. Hence, $I = \mathbb{C}G$. 

Conversely, assume $I = \mathbb{C}G$. We will finish the proof of the theorem by showing that $f=0$ is the only function that satisfies (\ref{eq:discretedoubletrans}). Set $\widetilde{I} = \{ \widetilde{\alpha} \mid \alpha \in I\}$. Now $\widetilde{I}$ is generated by $\widetilde{\chi_K}$ and $\widetilde{I} = \mathbb{C}G$ since $I =\mathbb{C}G$. Assume that $f \in \mathcal{F}(G)$ satisfies (\ref{eq:discretedoubletrans}). Then by Lemma \ref{equivalencedoubletrans}, $\widetilde{\chi_K} \ast L_gf = R_gf \ast \widetilde{\chi_K} = 0$ for all $g \in G$. We now obtain $f \ast \widetilde{I} = 0 =\widetilde{I} \ast f$ since $\widetilde{\chi_K}\ast L_gf = \widetilde{\chi_K} \ast \chi_{g^{-1}} \ast f$ and $R_g f \ast \widetilde{\chi_k} = f \ast \chi_g \ast \widetilde{\chi_k}$. Now $\chi_1 \in \widetilde{I}$ and $0 = \chi_1 \ast f = f$, thus $K$ is a $\mathcal{F}(G)$-Pompeiu set and the theorem is proved.

\subsection{Examples} 
Before we give examples of $\mathcal{F}(G)$-Pompeiu groups we need to define the augmentation ideal of a group ring. Define a map from $\mathbb{C}G$ into $\mathbb{C}$ by 
\begin{equation*}
\varepsilon \left( \sum_{g \in G} a_g g\right) = \sum_{g \in G} a_g.
\end{equation*} 
The map $\varepsilon$ is a ring homomorphism onto $\mathbb{C}$. The augmentation ideal of $\mathbb{C}G$, which we will denote by $\omega(\mathbb{C}G)$, is the kernel of $\varepsilon$. For information about $\omega(\mathbb{C}G)$ see \cite[Chapter 3]{Passman85}. If $K$ is a nonempty finite subset of $G$, then $\chi_k \notin \omega(\mathbb{C}G)$ due to $\varepsilon (\chi_K) = \vert K \vert$. The main result of \cite{BonHartPassSmith76} showed that the only nontrivial ideal in $\mathbb{C}G$ for algebraically closed groups and universal groups is $\omega(\mathbb{C}G)$. Thus the ideal generated by $\chi_k$ in these groups must be all of $\mathbb{C}G$. Therefore, algebraically closed groups and universal groups are $\mathcal{F}(G)$-Pompeiu groups. 
\begin{Rem}
These groups have elements of finite order, which implies that there are nonzero functions in $\mathbb{C}G$ that satisfy (\ref{eq:discretelefttrans}). See \cite[Section 2]{Puls13} for the details.
\end{Rem}

\section{Theorem \ref{l2Pompeiugroup}}\label{Theoremtwo}
In this section we will prove Theorem \ref{l2Pompeiugroup}. We begin by discussing group von Neumann algebras. For a more detailed explanation of group von Neumann algebras see \cite[Section 8]{Linnell98}. Recall that $\ell^2(G)$ is the set of all formal sums $\sum_{g \in G} a_g g$ for which $\sum_{g \in G} \vert a_g \vert^2 < \infty$. Furthermore, $\ell^2(G)$ is a Hilbert space with Hilbert bases $\{ g \mid g \in G\}$. For $f = \sum_{g \in G} a_g g \in \ell^2(G)$ and $h = \sum_{g \in G} b_g g \in \ell^2(G)$, the inner product $\langle f, h \rangle$ is defined to be $\sum_{g \in G} a_g \overline{b_g}$. If $ f \in \mathbb{C}G$ and $h \in \ell^2(G)$, then $f \ast h \in \ell^2(G)$. In fact, multiplication on the left by $f$ is a continuous linear operator on $\ell^2(G)$. Thus we can consider $\mathbb{C}G$ to be a subring of $\mathcal{B}(\ell^2(G))$, the set of bounded operators on $\ell^2(G)$. Denote by $\mathcal{N}(G)$ the weak closure of $\mathbb{C}G$ in $\mathcal{B}(\ell^2(G))$. The space $\mathcal{N}(G)$ is known as the {\em group von Neumann algebra} of $G$. For $T \in \mathcal{B}(\ell^2(G))$ the following are standard facts.
\begin{enumerate}[(i)]
\item $T \in \mathcal{N}(G)$ if and only if there exists a net $(T_n)$ in $\mathbb{C}G$ such that \\$\lim_{n \rightarrow \infty} \langle T_nu, v \rangle \rightarrow \langle Tu, v \rangle$ for all $u, v \in \ell^2(G)$.
\item $ T \in \mathcal{N}(G)$ if and only if $(Tf) \ast \chi_g = T(f \ast \chi_g)$ for all $g \in G$. \label{rightmap}
\end{enumerate}
Another way of expressing (\ref{rightmap}) is that $T \in \mathcal{N}(G)$ if and only if $T$ is a right $\mathbb{C}G$-map. Using (\ref{rightmap}) we can see that if $T \in \mathcal{N}(G)$ and $T\chi_1 = 0$, then $T\chi_g = 0$ for all $g \in G$ and hence $Tf =0$ for all $f \in \mathbb{C}G$. It follows that $T = 0$ and so the map defined by $T \mapsto T\chi_1$ is injective. Therefore the map $T \mapsto T\chi_1$ allows us to identify $\mathcal{N}(G)$ with a subspace of $\ell^2(G)$. Thus algebraically we have 
\[ \mathbb{C}G \subseteq \mathcal{N}(G) \subseteq \ell^2(G). \]
It is not difficult to show that if $f \in \ell^2(G)$, then $f \in \mathcal{N}(G)$ if and only if $f \ast h \in \ell^2(G)$ for all $h \in \ell^2(G)$. For $ f = \sum_{g \in G} a_g g \in \ell^2(G)$, define $f^{\ast} = \sum_{g \in G} \overline{a_g} g^{-1} \in \ell^2(G)$. Then for $f \in \mathcal{N}(G)$ we have $\langle f \ast u, v \rangle = \langle u, f^{\ast} \ast v\rangle$ for all $u, v \in \ell^2(G)$; thus $f^{\ast}$ is the adjoint operator of $f$.

Two elements $x, y$ in $G$ are said to be {\em conjugate} in $G$ if there exists a $g \in G$ for which $g^{-1} x g =y$. Recall that the conjugation action of $G$ on itself is an equivalence relation.  Suppose $C$ is a finite conjugacy class of $G$. Let $c = \sum_{x \in C} x$, then $c \in \mathbb{C}G$. The group ring elements $c$ are known as {\em finite class sums}. For $x \in G$, denote by $C_x$ the class containing $x$. We will need the following 
\begin{Lem} \label{centerofN} 
Let $\mathcal{S}$ be the set of all finite class sums of $G$. Each element in the center of $\mathcal{N}(G), \mathcal{Z}(\mathcal{N}(G))$, is a formal sum of elements in $\mathcal{S}$. 
\end{Lem}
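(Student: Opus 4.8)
The plan is to take an arbitrary $f = \sum_{g \in G} a_g g \in \mathcal{Z}(\mathcal{N}(G))$ and extract two facts about its coefficient function $g \mapsto a_g$: that it is constant on conjugacy classes, and that it vanishes off the finite conjugacy classes. The first uses only that $f$ commutes with all of $\mathbb{C}G$; the second uses the identification of $\mathcal{N}(G)$ with a subspace of $\ell^2(G)$.

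For the first fact, since $\mathbb{C}G \subseteq \mathcal{N}(G)$, the element $f$ commutes with $\chi_h$ for every $h \in G$. As $\chi_h$ is a unit in $\mathbb{C}G$ with inverse $\chi_{h^{-1}}$, the relation $\chi_h \ast f = f \ast \chi_h$ is equivalent to $\chi_h \ast f \ast \chi_{h^{-1}} = f$. A direct computation gives $\chi_h \ast f \ast \chi_{h^{-1}} = \sum_{g \in G} a_g\, hgh^{-1} = \sum_{x \in G} a_{h^{-1}xh}\, x$, so comparing coefficients yields $a_{h^{-1}xh} = a_x$ for all $x, h \in G$. Since $h^{-1}xh$ ranges over the class $C_x$ as $h$ varies over $G$, the coefficient function is constant on each conjugacy class.

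For the second fact, recall from the discussion preceding the lemma that the map $T \mapsto T\chi_1$ identifies $\mathcal{N}(G)$ with a subspace of $\ell^2(G)$; in particular $f \in \ell^2(G)$, so $\sum_{g \in G} \lvert a_g \rvert^2 < \infty$. If some $x \in G$ with $a_x \neq 0$ had an infinite class $C_x$, then by the first fact $\lvert a_y \rvert = \lvert a_x \rvert > 0$ for the infinitely many $y \in C_x$, contradicting square-summability. Hence $a_x = 0$ whenever $C_x$ is infinite.

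Combining the two facts, $f$ is the formal ($\ell^2$-convergent) sum $\sum_{C} \lambda_C\, c_C$, where $C$ runs over the finite conjugacy classes of $G$, $c_C = \sum_{x \in C} x \in \mathcal{S}$ is the corresponding finite class sum, and $\lambda_C$ is the common value of $a_x$ for $x \in C$; this is exactly the assertion of the lemma. I do not expect a genuine obstacle here: the argument is bookkeeping, and the only point needing a little care is invoking square-summability of $f \in \ell^2(G)$ to discard the infinite classes, which is where the hypothesis that $f$ lies in $\mathcal{N}(G)$ (and not merely that it commutes formally with $\mathbb{C}G$) is actually used.
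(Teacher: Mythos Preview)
Your proof is correct and follows essentially the same approach as the paper: both conjugate $f$ by group elements to show the coefficients are constant on conjugacy classes, then invoke $f \in \ell^2(G)$ to rule out infinite classes. The only difference is cosmetic (you write $\chi_h \ast f \ast \chi_{h^{-1}}$ where the paper writes $\chi_{g^{-1}} \ast f \ast \chi_g$).
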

\begin{proof}
Let $f = \sum_{g \in G} a_g g \in \mathcal{N}(G)$. Then $f \in \mathcal{Z}(\mathcal{N}(G))$ if and only if $\chi_{g^{-1}} \ast f \ast \chi_g = f$ for all $g \in G$. Since
\[ \chi_{g^{-1}} \ast f \ast \chi_g = \sum_{x \in G} a_x (g^{-1}xg) = \sum_{y \in G} a_{gyg^{-1}} y, \]
we see immediately that $a_{gyg^{-1}} = a_y$ because $( \chi_{g^{-1}} \ast f \ast \chi_g)(y) = f(y)$. Thus $f$ is constant on $C_y$. If $f(y) \neq 0$ on $C_y$, then $C_y$ is finite due to $f \in \ell^2(G)$. The class sums have disjoints supports, thus if $f \in \mathcal{Z}(\mathcal{N}(G))$ then it is a formal sum of finite class sums in $\mathcal{S}$. 
\end{proof}
The {\em finite conjugate subgroup} of $G$ is defined by
\[ \Delta (G) = \{g \in G \mid g \hbox{ has a finite number of conjugates} \}. \]
The following immediate consequence of Lemma \ref{centerofN} will be crucial in our proof of Theorem \ref{NGPompeiugroup}.
\begin{Cor}\label{subsetcenter}
The center of $\mathcal{N}(G)$ is contained in the center of $\mathcal{N}(\Delta G).$
\end{Cor}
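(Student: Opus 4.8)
The plan is to read off the conclusion almost directly from Lemma \ref{centerofN}. Recall first that $\Delta(G)$ is a subgroup of $G$ (it is closed under products and inverses because a product of an element with $m$ conjugates and one with $n$ conjugates has at most $mn$ conjugates, and $g$ and $g^{-1}$ have the same number of conjugates), so $\mathbb{C}\Delta G \subseteq \mathbb{C}G$, $\ell^2(\Delta G) \subseteq \ell^2(G)$, and there is a natural inclusion $\mathcal{N}(\Delta G) \subseteq \mathcal{N}(G)$. Now take $z \in \mathcal{Z}(\mathcal{N}(G))$. By Lemma \ref{centerofN}, $z$ is a formal sum of finite class sums, and every element occurring in a finite class sum has finitely many conjugates, hence lies in $\Delta(G)$. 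Thus $\operatorname{supp}(z) \subseteq \Delta(G)$, and since $z \in \ell^2(G)$ we get $z \in \ell^2(\Delta G)$.

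The next step is to make precise the inclusion $\mathcal{N}(\Delta G) \subseteq \mathcal{N}(G)$. Writing $G$ as a disjoint union of right cosets $G = \bigsqcup_i \Delta(G) t_i$ gives an orthogonal decomposition $\ell^2(G) = \bigoplus_i \ell^2(\Delta(G) t_i)$, and each summand is the image of $\ell^2(\Delta G)$ under the bijective isometry given by right translation by $t_i$. Since left convolution commutes with right translation, left convolution by an element of $\mathcal{N}(\Delta G)$ preserves each summand and acts on it, up to this isometry, exactly as on $\ell^2(\Delta G)$; hence it is a bounded operator on $\ell^2(G)$ with the same norm, and it is clearly a right $\mathbb{C}G$-map, so it lies in $\mathcal{N}(G)$. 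This identifies $\mathcal{N}(\Delta G)$ with the subalgebra of $\mathcal{N}(G)$ consisting of those elements supported on $\Delta(G)$, with convolution in the two algebras agreeing.

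It remains to check that our $z$ actually lies in $\mathcal{N}(\Delta G)$. Here I would invoke the characterization recorded in the text: an element $f \in \ell^2(H)$ lies in $\mathcal{N}(H)$ if and only if $f \ast h \in \ell^2(H)$ for all $h \in \ell^2(H)$. Apply this with $H = \Delta(G)$: given $h \in \ell^2(\Delta G) \subseteq \ell^2(G)$, we have $z \ast h \in \ell^2(G)$ because $z \in \mathcal{N}(G)$, and $\operatorname{supp}(z \ast h) \subseteq \operatorname{supp}(z)\operatorname{supp}(h) \subseteq \Delta(G)\Delta(G) = \Delta(G)$, so in fact $z \ast h \in \ell^2(\Delta G)$. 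Hence $z \in \mathcal{N}(\Delta G)$.

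Finally, since $z$ lies in the center of $\mathcal{N}(G)$ it commutes with every element of $\mathcal{N}(G)$, in particular with every element of the subalgebra $\mathcal{N}(\Delta G)$; together with $z \in \mathcal{N}(\Delta G)$ this gives $z \in \mathcal{Z}(\mathcal{N}(\Delta G))$, which is exactly the assertion. I expect the only mildly delicate point to be the bookkeeping for $\mathcal{N}(\Delta G) \subseteq \mathcal{N}(G)$ — that a convolution operator bounded on $\ell^2(\Delta G)$ extends to one bounded on $\ell^2(G)$ — but the right-coset decomposition makes this transparent, and everything else is a formal consequence of Lemma \ref{centerofN}.
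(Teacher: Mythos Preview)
Your proof is correct and follows the same route as the paper's one-line argument, which simply notes that every finite class sum in $\mathbb{C}G$ lies in $\mathbb{C}(\Delta G)$. You have carefully supplied the details the paper leaves implicit---namely, why an element of $\mathcal{N}(G)$ supported on $\Delta(G)$ actually belongs to $\mathcal{N}(\Delta G)$ and commutes with it---but the core idea via Lemma~\ref{centerofN} is identical.
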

\begin{proof}
Every finite class sum contained in $\mathbb{C}G$ is contained in $\mathbb{C}(\Delta G)$.
\end{proof}

We will prove Theorem \ref{l2Pompeiugroup} by reducing to the $\mathcal{N}(G)$ case, which we now prove. 
\begin{Thm} \label{NGPompeiugroup}
If $G$ is a group with no nontrivial finite normal subgroups, then $G$ is a $\mathcal{N}(G)$-Pompeiu group.
\end{Thm}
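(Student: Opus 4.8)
The plan is to pass to the group von Neumann algebra and show that the smallest weakly closed two–sided ideal of $\mathcal N(G)$ containing $\widetilde{\chi_K}$ is all of $\mathcal N(G)$. So fix a nonempty finite $K\subseteq G$ and suppose $f\in\mathcal N(G)$ satisfies (\ref{eq:discretedoubletrans}). By Lemma \ref{equivalencedoubletrans}, $\widetilde{\chi_K}\ast\chi_{g^{-1}}\ast f=0$ for every $g\in G$, so $\widetilde{\chi_K}\,\mathbb C G\,f=0$; since $\mathbb C G$ is weakly dense in $\mathcal N(G)$ and multiplication in $\mathcal N(G)$ is separately weakly continuous, this upgrades to $\widetilde{\chi_K}\,\mathcal N(G)\,f=0$, hence $Jf=0$, where $J=\overline{\mathcal N(G)\widetilde{\chi_K}\mathcal N(G)}^{w}$ is the smallest weakly closed two–sided ideal of $\mathcal N(G)$ containing $\widetilde{\chi_K}$. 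A weakly closed two–sided ideal of a von Neumann algebra has the form $z\mathcal N(G)$ for a central projection $z$, so $J=z(\widetilde{\chi_K})\mathcal N(G)$ with $z(\widetilde{\chi_K})$ the central support of $\widetilde{\chi_K}$. Thus the entire proof reduces to the claim $z(\widetilde{\chi_K})=1$: once that is known, $1\in J$ and $f=1\cdot f=0$.

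To establish $z(\widetilde{\chi_K})=1$ I would argue by contradiction and assume that $q:=1-z(\widetilde{\chi_K})$ is a nonzero central projection of $\mathcal N(G)$; then $q\widetilde{\chi_K}=0$, and we write $\widetilde{\chi_K}=\chi_{K^{-1}}$. By Corollary \ref{subsetcenter}, $q\in\mathcal Z(\mathcal N(\Delta(G)))$, and by Lemma \ref{centerofN}, $q$ is, as an element of $\ell^2(G)$, supported on $\Delta(G)$. Here classical FC–group theory enters: the torsion elements of $\Delta(G)$ form a subgroup, and this subgroup is the union of all finite normal subgroups of $G$ — indeed a torsion element of $\Delta(G)$, together with its finitely many $G$–conjugates, generates a finitely generated FC–group, which is centre-by-finite (B.\,H. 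Neumann) and hence a finite normal subgroup of $G$. Since $G$ has no nontrivial finite normal subgroup, $\Delta(G)$ is torsion-free, and a torsion-free FC–group is abelian (the commutator subgroup of an FC–group is periodic). Therefore $\mathcal N(\Delta(G))\cong L^\infty(X)$ with $X=\widehat{\Delta(G)}$ a compact abelian group carrying normalized Haar measure, and $q$ corresponds to the indicator $\mathbf 1_E$ of a Borel set $E\subseteq X$ with $\mathrm{Haar}(E)>0$.

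The final step is to contradict $\mathrm{Haar}(E)>0$ using $\chi_{K^{-1}}\ast q=0$. The sets $k\cdot\mathrm{supp}(q)$ for $k\in K^{-1}$ lying in distinct cosets of $\Delta(G)$ are disjoint, so the part of $\chi_{K^{-1}}\ast q$ supported in any single coset of $\Delta(G)$ must vanish on its own; choosing a coset that meets $K^{-1}$ and left-translating it onto $\Delta(G)$ yields a nonempty finite $K'\subseteq\Delta(G)$ with $\chi_{K'}\ast q=0$ inside $\mathcal N(\Delta(G))=L^\infty(X)$, that is, the function $\psi\mapsto\sum_{k\in K'}\psi(k)$ on $X$ vanishes $\mathrm{Haar}$–almost everywhere on $E$. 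But $\langle K'\rangle$ is a finitely generated torsion-free abelian group, so $\langle K'\rangle\cong\mathbb Z^{r}$ for some $r$, and that function factors through the natural measure-preserving surjection $X\to\widehat{\langle K'\rangle}\cong\mathbb T^{r}$ as a nonzero Laurent polynomial, whose zero set in $\mathbb T^{r}$ is Haar-null. Hence $\mathrm{Haar}(E)=0$, a contradiction, so $q=0$ and $z(\widetilde{\chi_K})=1$.

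I expect the hard part to be precisely this contradiction step: once $\widetilde{\chi_K}$ has been turned into a zero divisor through a nonzero central projection, ruling that out needs two genuine inputs — the FC–group fact that, in the absence of finite normal subgroups, $\Delta(G)$ is torsion-free abelian (so its dual group is connected), and the harmonic-analysis fact that a nonzero exponential sum on a torus has a Haar-null zero set. The hypothesis is exactly what kills torsion in $\Delta(G)$: if $g$ has order two then $1+g$ vanishes on a set of positive Haar measure in the dual group, which is the source of the failure of the Pompeiu property recorded in Proposition \ref{finitenormal}. By comparison, the von Neumann algebra bookkeeping — weakly closed two–sided ideals, central supports, separate weak continuity of multiplication — is routine once these two points are in place.
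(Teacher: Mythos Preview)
Your argument is correct and follows essentially the same architecture as the paper's: both reduce the question to showing that no nonzero central projection of $\mathcal N(G)$ annihilates $\widetilde{\chi_K}$, use Corollary~\ref{subsetcenter} to push that projection into $\mathcal N(\Delta(G))$, invoke the FC-group fact that the hypothesis forces $\Delta(G)$ to be torsion-free abelian, and then decompose $\widetilde{\chi_K}$ over cosets of $\Delta(G)$ to obtain a zero-divisor equation inside $\mathcal N(\Delta(G))$ that cannot hold. The only substantive difference is in the external inputs: the paper cites \cite[Lemma 4.1.5(iii)]{Passman85} for the structure of $\Delta(G)$ and \cite{Cohen79} for the absence of zero divisors between $\mathbb C(\Delta G)$ and $\ell^2(\Delta G)$, whereas you sketch both facts directly (Neumann's centre-by-finite theorem for the first, and the Haar-null zero set of a nonzero Laurent polynomial on $\mathbb T^r$ for the second). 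Your version is thus more self-contained, at the cost of a little extra length; the paper's is shorter but leans on two references. Neither route offers a genuinely different idea.
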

\begin{proof}
It follows from \cite[Lemma 4.1.5(iii)]{Passman85} that $\Delta(G)$ is a torsion-free Abelian group since $G$ has no nontrivial finite normal subgroups. Let $K$ be a finite subset of $G$ and let $I$ be the weakly closed ideal in $\mathcal{N}(G)$ generated by $\widetilde{\chi_K}$. Now suppose there exists a nonzero $f \in \mathcal{N}(G)$ that satisfies $\widetilde{\chi_K} \ast L_gf = 0$ for all $g \in G$. So $f$ belongs to the annihilator ideal $I^{\perp}$ of $I$ in $\mathcal{N}(G)$. Thus $\mathcal{N}(G) = I \bigoplus I^{\perp}$, where $\bigoplus$ denotes the direct sum. Thus there exists a nonzero central idempotent $e$ in $\mathcal{N}(G)$ for which $e \ast \mathcal{N}(G) = I^{\perp}$. Because $\chi_1 \in \mathcal{N}(G), e \in I^{\perp}$ and it follows that $I \ast e =0$. By Corollary \ref{subsetcenter}, $e$ also belongs to the center of $\mathcal{N}(\Delta G)$. Let $T$ be a right transversal for $\Delta(G)$ in $G$. Write $\widetilde{\chi_K} = \sum_{t \in T} \chi_{Kt} \ast t$, where $\chi_{Kt} \in \mathbb{C}(\Delta G)$. Now 
\begin{eqnarray*}
   0  & = \widetilde{\chi_K} \ast e  &  = (\sum_{t \in T} \chi_{Kt} \ast t) \ast e \\
       &                                            & = \sum_{t \in T} (\chi_{Kt} \ast e ) \ast t.                                         
\end{eqnarray*}
Thus $\chi_{Kt} \ast e = 0$ for each $t \in T$. Because $\widetilde{\chi_K} \neq 0$ there exists a $t'$ in $T$ for which $\chi_{Kt'} \neq 0$. This contradicts the fact that $\chi_{Kt'} \ast \beta \neq 0$ for all $0\neq \beta \in \ell^2(\Delta G)$, which was proved in \cite{Cohen79}. Hence, there is no nonzero $f \in \mathcal{N}(G)$ that satisfies $\widetilde{\chi_K} \ast L_g f =0$ for all $g \in G$. Therefore, every finite subset of $G$ is a $\mathcal{N}(G)$-Pompeiu set and $G$ is a $\mathcal{N}(G)$-Pompeiu group. 
\end{proof}
\subsection{Proof of Theorem \ref{l2Pompeiugroup}} We now prove Theorem \ref{l2Pompeiugroup}. We start with a definition. A nonzero divisor in a ring $R$ is an element $s$ such that $sr \neq 0 \neq rs$ for all $r \in R\setminus 0$. 

Proposition \ref{finitenormal} says that if $G$ contains a nontrivial finite normal subgroup, then it cannot be  a $\ell^2(G)$-Pompeiu group.

Conversely, assume there exists a nonempty finite subset $K$ of $G$ and a nonzero $f \in \ell^2(G)$ that satisfies $\widetilde{\chi_K} \ast L_g f =0$ for all $g \in G$. By \cite[Lemma 7]{Linnell92} there exists a nonzero divisor $\theta \in \mathcal{N}(G)$ such that $f \ast \theta \in \mathcal{N}(G)$. Suppose $f \ast \theta = 0$. Then $\theta^{\ast} \ast f^{\ast} =0$. If $e \in \mathcal{B}(\ell^2(G))$ is the projection from $\ell^2(G)$ onto $\overline{f^{\ast} \ast \mathbb{C}G}$, then $e \in \mathcal{N}(G)$ by \cite[Lemma 5]{Linnell91}, $e \neq 0$ because $f^{\ast} \neq 0$ and $\theta^{\ast} \ast e = 0$. Hence $e \ast \theta = 0$, contradicting the fact $\theta$ is a nonzero divisor in $\mathcal{N}(G)$, so $f \ast \theta \neq 0$. It follows from Theorem \ref{NGPompeiugroup} that there exists a $g \in G$ such that $\widetilde{\chi_k} \ast L_g (f \ast \theta) \neq 0$ since $0 \neq f \ast \theta \in \mathcal{N}(G)$. But $\widetilde{\chi_K} \ast L_g(f \ast \theta) = (\widetilde{\chi_K} \ast g^{-1} \ast f) \ast \theta =0$ because we are assuming $\widetilde{\chi_K} \ast L_g f =0$ for all $g \in G$, a contradiction. Hence, there does not exist a nonzero $f \in \ell^2(G)$ and a nonempty finite subset $K$ of $G$ for which $\widetilde{\chi_K} \ast L_g f = 0 $ for all $g \in G$. Therefore it  follows from Lemma \ref{equivalencedoubletrans} that $G$ is a $\ell^2(G)$-Pompeiu group, as desired.
\bibliographystyle{plain}
\bibliography{twosidedpompeiudiscretegroup}

\end{document}